\newcommand{\R}{\mathbb R}
\newcommand{\Pro}{\mathbb P}
\newtheorem{thm}{Theorem}[section]
\newtheorem{cor}{Corollary}[section]
\newtheorem{lemma}{Lemma}[section]
\newtheorem{df}{Definition}[section]
\theoremstyle{remark}
\newtheorem*{rmk}{Remark}
\begin{document}


\title{About the isotropy constant of random convex sets}

\author{David Alonso-Guti\'{e}rrez*}

\thanks{*Supported by FPI Scholarship from DGA (Spain)}

\email{498220@celes.unizar.es}

\address{Universidad de Zaragoza}

\date{July 2007}

\begin{abstract}
Let $K$ be the symmetric convex hull of $m$ independent random vectors uniformly distributed on the unit sphere of $\R^n$. We prove that, for every $\delta>0$, the isotropy constant of $K$ is bounded by a constant $c(\delta)$ with high probability, provided that $m\geq (1+\delta)n$. This result answers a question raised in \cite{KLKO}.
\end{abstract}

\maketitle

\section{Introduction and notation}
A convex body $K\subset \R^n$ is a compact convex set with non-empty interior. A convex body is said to be in isotropic position if it has volume 1 and satisfies the following two conditions:
\begin{itemize}
\item$\int_Kxdx=0 \textrm{ (center of mass at 0)}$
\item$\int_K\langle x,\theta\rangle^2dx=L_K^2\quad \forall \theta\in S^{n-1}$
\end{itemize}
where $L_K$ is a constant independent of $\theta$, which is called the isotropy constant of $K$. Here $\langle\cdot,\cdot\rangle$ denotes the standard scalar product in $\R^n$.

\bigskip
It is well known that for every convex body $K\in\R^n$ there exists an affine map $T$ such that $TK$ is isotropic. Furthermore, $K$ and $TK$ are both isotropic if and only if $T$ is an orthogonal transformation. In this case the isotropy constant of both $K$ and $TK$ is the same. Hence, we can define the isotropy constant for every convex body. It verifies the following equation:
$$
nL_K^2=\min\left\{\frac{1}{|TK|^{1+\frac{2}{n}}}\int_{TK}|x|^2dx \, |\,T\in GL(n) \right\},
$$
where $|\cdot|$ denotes both the volume of a set in $\R^n$ and the euclidean norm of a vector (see  \cite{MP}).

\bigskip
It is conjectured that there exists an absolute constant $C$ such that for every convex body $K$, $L_K\leq C$. This conjecture has been verified for several classes of convex bodies such as unconditional bodies, zonoids and others. However, the best known general upper bound is $L_K\leq cn^{\frac{1}{4}}$,\cite{KL},  which improves the earlier estimate $L_K\leq cn^{\frac{1}{4}}\log n$ given by Bourgain  (see \cite{B}). In a recent paper \cite{KLKO}, Klartag and Kozma proved that with high probability the conjecture is true for the convex hull of independent gaussian vectors and suggested that with the same techniques it should be possible to obtain an analogous result for the convex hull of independent points uniformly distributed on the sphere (in this case the coordinates are not independent). In this paper we are going to prove this result. To be precise, we are going to prove the following theorem:
\begin{thm}\label{teoremaprinc}
For every $\delta>0$, there exist constants $c(\delta)$, $c_1$ and $c_2$, such that if $m>(1+\delta)n$, $\{P_i\}_{i=1}^m$ are independent random vectors on $S^{n-1}$ and $K= \textrm{conv}\{\pm P_1,\dots,\pm P_m\}$, then
$$
\Pro\{L_{K}\leq c(\delta)\}\geq 1-c_1e^{-c_2n\min\{1,\log\frac{m}{n}\}}.
$$
\end{thm}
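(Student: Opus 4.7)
Following Klartag and Kozma \cite{KLKO}, who treated the Gaussian analogue, my plan is to transfer their estimate to our spherical polytope $K$ via a coupling argument. The variational formula given in the introduction can be rewritten as
\[
L_K=\frac{(\det\Sigma_K)^{1/(2n)}}{|K|^{1/n}}, \qquad \Sigma_K=\frac{1}{|K|}\int_K xx^T\,dx,
\]
where the optimal $T$ in $GL(n)$ is $\Sigma_K^{-1/2}$ (up to scaling), bringing $K$ into isotropic position. Note that the trivial choice $T=I$ together with $K\subseteq B_2^n$ would only give $nL_K^2\leq|K|^{-2/n}$, which is far too weak: by Urysohn and concentration of $h_K(\theta)=\max_i|\langle P_i,\theta\rangle|$ one has $|K|^{1/n}\lesssim\sqrt{\log m}/n$ in the regime $m=(1+\delta)n$, much smaller than the $c/\sqrt n$ such a bound would require. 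The plan is to show instead that, with the claimed probability, both the numerator and the denominator in the formula above are of order $\sqrt{\log m}/n$, so that their ratio is bounded by a constant depending only on $\delta$.

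To obtain these estimates I would couple $K$ with the Gaussian polytope $K_G=\textrm{conv}\{\pm G_1,\ldots,\pm G_m\}$ through the polar decomposition $G_i=r_iP_i$, where $r_i=|G_i|\sim\chi_n$ is independent of $P_i\in S^{n-1}$. The pointwise inclusion $K_G\subseteq(\max_j r_j)K$, combined with standard $\chi_n$-concentration giving $\max_j r_j\leq C\sqrt n$ on an event of probability $\geq 1-c_1e^{-c_2n\min\{1,\log(m/n)\}}$, would transfer Klartag-Kozma's lower bound on $|K_G|^{1/n}$ to the desired lower bound on $|K|^{1/n}$. A parallel argument, using $\langle\Sigma_K\theta,\theta\rangle\leq h_K(\theta)^2$ together with uniform control of the support function $h_K(\theta)$ via an $\varepsilon$-net argument and sub-Gaussian concentration of $\langle P_i,\theta\rangle$ on $S^{n-1}$, yields the matching upper bound on $(\det\Sigma_K)^{1/(2n)}$.

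The main obstacle is that a direct Banach-Mazur comparison arising from $(\min_j r_j)K\subseteq K_G\subseteq(\max_j r_j)K$ would multiply $L_K$ by $(\max_j r_j/\min_j r_j)^{n+2}=e^{O(\sqrt{n\log m})}$, which is not bounded. To bypass this, one works directly at the level of the integrals $|K|$ and $\int_K|x|^2\,dx$ via polar decompositions, exploiting the independence of the norms $(r_j)$ from the directions $(P_j)$: the radial functions of $K_G$ and $K$ differ by factors that concentrate tightly over the random $(r_j)$, and a Fubini/conditioning argument recovers the needed scaling factor $\asymp n^{n/2}$ between $|K_G|$ and $|K|$ at the required exponential precision. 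The probability bound $1-c_1e^{-c_2n\min\{1,\log(m/n)\}}$ then emerges from a union over the $\chi_n$ concentration event, the uniform bound on the support function, and Klartag-Kozma's Gaussian volume estimate, whose degradation as $m/n\to 1$ dictates the shape of the exponent.
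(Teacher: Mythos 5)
Your plan splits $L_K=(\det\Sigma_K)^{1/(2n)}/|K|^{1/n}$ correctly, and your critique of the trivial bound is right, but the proposed control of the numerator has a gap of order $\sqrt n$ that cannot be closed along the route you describe. The inequality $\langle\Sigma_K\theta,\theta\rangle\leq h_K(\theta)^2$ is lossy by roughly a factor $n$: for $x\in K$ the typical size of $\langle x,\theta\rangle^2$ is about $h_K(\theta)^2/n$, not $h_K(\theta)^2$ (already for a Euclidean ball one has $\frac{1}{|B|}\int_B\langle x,\theta\rangle^2dx=\frac{r^2}{n+2}$ while $h_B(\theta)^2=r^2$). Even granting the best conceivable uniform estimate $h_K(\theta)\lesssim\sqrt{\log m/n}$, you would obtain $(\det\Sigma_K)^{1/(2n)}\lesssim\sqrt{\log m/n}$, whereas matching $|K|^{1/n}\gtrsim\sqrt{\log(m/n)}/n$ requires $(\det\Sigma_K)^{1/(2n)}\lesssim\sqrt{\log(m/n)}/n$. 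The ratio is $\gtrsim\sqrt n$, so your argument would only give $L_K\lesssim\sqrt{n\log n}$ in the regime $m=(1+\delta)n$, which is worse than the known $n^{1/4}$ bound. Moreover the uniform bound $\sup_\theta h_K(\theta)\lesssim\sqrt{\log m/n}$ is itself false — $\sup_\theta h_K(\theta)$ is exactly the circumradius of $K$, namely $1$ — and the $\varepsilon$-net argument you invoke would either require a net fine enough ($\varepsilon\sim\sqrt{\log m/n}$) to defeat the union bound, or, at a constant-scale net as in Bernstein-type arguments, would only return the trivial $h_K\leq 1$.

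What the paper does instead is bound $\operatorname{tr}\Sigma_K=\frac{1}{|K|}\int_K|x|^2dx$ directly and then use $n(\det\Sigma_K)^{1/n}\leq\operatorname{tr}\Sigma_K$. The key mechanism is a decomposition of $\int_K|x|^2dx$ over the facets $\mathcal F_k=\mathrm{conv}\{Q_1^k,\dots,Q_n^k\}$ (using $\frac{1}{|K|}\int_K|x|^2dx\leq\frac{n}{n+2}\sup_k\frac{1}{|\mathcal F_k|}\int_{\mathcal F_k}|y|^2dy$), combined with the exact identity
\[
\frac{1}{|\mathcal F_k|}\int_{\mathcal F_k}|y|^2dy=\frac{2}{n+1}+\frac{1}{n(n+1)}\sum_{i\neq j}\langle Q_i^k,Q_j^k\rangle,
\]
which reduces matters to showing $\sum_{i\neq j}\langle Q_i^k,Q_j^k\rangle\lesssim n\log(m/n)$ uniformly over all facets. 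This is done via Bernstein's inequality for the $\psi_2$ random variables $\sqrt n\,\langle P_i,\theta\rangle$, a $\frac12$-net on $S^{n-1}$, the identity $\sum_{i\neq j}\langle P_i,P_j\rangle=|\sum P_i|^2-n$, and a union bound over the $\binom{2m}{n}$ potential facets. It is precisely the built-in $1/(n(n+1))$ from the simplex moment that supplies the factor of $n$ that your $h_K^2$ bound discards. For the denominator, the paper does not pass through Gaussian polytopes at all: it shows directly that $\alpha B_2^n\subseteq K$ with probability $\geq 1-e^{-n}$ for $\alpha\asymp\sqrt{\log(m/n)/n}$ by estimating the probability that all $2m$ vertices lie in a slab $\{|\langle\cdot,\theta\rangle|\leq\alpha\}$ determined by a facet. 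Your Gaussian coupling for the volume is plausible (and closer in spirit to \cite{KLKO}), but as written the argument for the upper half of the ratio does not close, and without replacing the $h_K$-based step by something that exploits how thinly the mass of $K$ is spread — as the facet decomposition does — the theorem does not follow.
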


\bigskip
Along this paper we will denote by $\sigma$ the uniform probability measure on the sphere $S^{n-1}$ and by $\omega_n$ the volume of the euclidean ball. $\Delta^n$ will be the regular simplex of dimension $n$ and the letters $C, c, c_1, c_2,\dots$ will always denote absolute constants whose value may change from line to line.


\section{Some previous results}
\bigskip

In this section we are going to recall some known facts that we are going to use. We begin with the next
\begin{df}
We write $L_{\psi_2}$ for the space of real valued measurable functions on $S^{n-1}$ such that $\int_{S^{n-1}}e^{\frac{|f|^2}{\lambda^2}}d\sigma<\infty$ for some $\lambda>0$, and we set
$$
\Vert f\Vert_{\psi_2}=\inf\left\{\lambda>0\,:\,\int_{S^{n-1}}e^{\frac{|f|^2}{\lambda^2}}d\sigma<2\right\}.
$$
\end{df}

\bigskip
Bernstein's inequality gives a bound for the probability that the absolute value of the sum of $N$ independent random variables with mean 0 is bigger than $\epsilon N$ for certain values of $\epsilon$. There are several versions of this inequality depending on the space where these random variables belong to. We are going to use the following version, whose proof can be found in  \cite{BLM}.

\bigskip
\begin{thm}(Bernstein's inequality)
Assume $\{g_j\}_{j=1}^N\subset L_{\psi_2}$ such that $\Vert g_j\Vert_{L_{\psi_2}}\leq A$ for all $j$ and some constant $A$. Then, for all $\epsilon>0$,
$$
\Pro\left\{\left|\sum_{j=1}^Ng_j\right|>\epsilon N\right\}\leq 2e^{-\frac{\epsilon^2N}{8A^2}}.
$$
\end{thm}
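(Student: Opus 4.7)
The plan is the classical Chernoff bounding method: bound the moment generating function of each $g_j$ using the $\psi_2$ hypothesis, combine via independence, and optimise in the Markov inequality. The statement as phrased does not spell out independence or zero mean, but both are implicit in the name ``Bernstein's inequality'' and in the BLM reference, so I will take them as standing hypotheses.

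First, I would convert the $\psi_2$ bound into tail and moment information. The definition of the norm gives $\int_{S^{n-1}} e^{g_j^2/A^2}\,d\sigma \leq 2$, and Chebyshev then yields the sub-Gaussian tail
$$
\sigma(|g_j|>t) \leq 2 e^{-t^2/A^2}, \qquad t>0.
$$
Integrating this tail, or equivalently expanding $e^{\lambda g_j}$ in Taylor series and using the resulting even-moment bounds $\E g_j^{2k} \leq C^k k!\, A^{2k}$, produces a sub-Gaussian Laplace estimate
$$
\E\, e^{\lambda g_j} \leq e^{\lambda^2 A^2/c_0}
$$
valid for every real $\lambda$, where the zero-mean assumption is what lets us absorb the linear term of the expansion. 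An alternative route is the pointwise Young-type inequality $e^{\lambda g_j} \leq e^{\lambda^2 A^2/2}\, e^{g_j^2/(2A^2)}$ followed by Jensen's inequality and the $\psi_2$ integrability of $g_j$; either produces the same shape of bound.

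Second, independence of the $g_j$'s gives
$$
\E\, e^{\lambda \sum_{j=1}^N g_j} = \prod_{j=1}^N \E\, e^{\lambda g_j} \leq e^{\lambda^2 N A^2/c_0},
$$
so Markov's inequality implies $\Pro\bigl(\sum_{j=1}^N g_j > \epsilon N\bigr) \leq e^{-\lambda \epsilon N + \lambda^2 N A^2/c_0}$ for every $\lambda>0$. Optimising in $\lambda$ (take $\lambda = c_0 \epsilon/(2A^2)$) produces a bound of the form $e^{-c_0 \epsilon^2 N/(4A^2)}$, and applying the same argument to $\{-g_j\}$ to handle the lower tail introduces the factor $2$ in front.

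The only real obstacle is bookkeeping of the absolute constant in the exponent. The MGF estimate coming from the $\psi_2$ hypothesis can be obtained through any of several routes, each yielding its own numerical value of $c_0$, and reaching exactly $\epsilon^2 N/(8A^2)$ requires choosing a particular version of the intermediate inequality and tracking constants through the centring correction and the final optimisation in $\lambda$. The structural argument itself is entirely standard.
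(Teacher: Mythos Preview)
The paper does not actually prove this theorem: it is quoted as a known tool with a reference to \cite{BLM}, and no argument is given. There is therefore nothing in the paper to compare your proof against.

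Your outline is the standard sub-Gaussian Chernoff argument and is structurally correct. The Taylor-expansion route for the MGF --- using $\E g_j=0$ to kill the linear term and the even-moment bounds $\E g_j^{2k}\le 2\,k!\,A^{2k}$ that follow from $\int e^{g_j^2/A^2}\,d\sigma\le 2$ --- is exactly how the proof in \cite{BLM} goes. One small caveat: your ``alternative route'' via the pointwise inequality $e^{\lambda g_j}\le e^{\lambda^2 A^2/2}\,e^{g_j^2/(2A^2)}$ makes no use of the zero-mean hypothesis and leaves a multiplicative factor of $\sqrt{2}$ per variable; after taking the product this becomes $2^{N/2}$ and ruins the bound for small $\epsilon$, so that variant is not genuinely interchangeable with the first without additional work.
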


\bigskip
We will apply this inequality to the independent identically distributed random variables $\sqrt{n}\langle P,\theta\rangle$ where $P$ is a point distributed uniformly in the unit sphere. To see that these random variables are in $L_{\psi_2}$ we need the following lemma:

\bigskip
\begin{lemma}
For all $q\geq 1$ and for all $\theta \in S^{n-1}$
$$
\int_{S^{n-1}}|\langle u,\theta\rangle|^qd\sigma(u) = \frac{2\Gamma\left(\frac{1+q}{2}\right)\Gamma\left(1+\frac{n}{2}\right) }{\sqrt{\pi}n\Gamma\left(\frac{n+q}{2}\right)}.
$$

\begin{proof}
Changing to polar coordinates in the following integral, we have that
\begin{align*}
\int_{\R^n}|\langle x,\theta\rangle|^q\frac{e^{-\frac{|x|^2}{2}}}{\left(\sqrt{2\pi}\right)^n}dx&=\frac{n\omega_n}{\left(\sqrt{2\pi}\right)^n}\int_0^\infty r^{n+q-1}e^{-\frac{r^2}{2}}dr\int_{S^{n-1}}|\langle u,\theta\rangle|^qd\sigma(u)\cr
&=\frac{2^{\frac{q}{2}}n\Gamma\left(\frac{n+q}{2}\right)}{2\Gamma\left(1+\frac{n}{2}\right)}\int_{S^{n-1}}|\langle u,\theta\rangle|^qd\sigma(u).
\end{align*}

\bigskip
On the other hand, since
\begin{align*}
\int_{\R^n}|\langle x,\theta\rangle|^q\frac{e^{-\frac{|x|^2}{2}}}{\left(\sqrt{2\pi}\right)^n}dx&=\int_{-\infty}^\infty|x|^q\frac{e^{-\frac{x^2}{2}}}{\sqrt{2\pi}}dx
=\frac{2}{\sqrt{2\pi}}\int_0^\infty x^qe^{-\frac{x^2}{2}}dx\cr
&=\frac{2^{\frac{q}{2}}\Gamma\left(\frac{1+q}{2}\right)}{\sqrt{\pi}},
\end{align*}
we obtain the result.
\end{proof}
\end{lemma}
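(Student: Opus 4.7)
The plan is to compute the auxiliary Gaussian integral
$$I=\int_{\R^n}|\langle x,\theta\rangle|^q\,\frac{e^{-|x|^2/2}}{(\sqrt{2\pi})^n}\,dx$$
in two different ways and equate the results to solve for the spherical integral. This is the classical ``Gaussian bridge'' technique for computing moments on the sphere: the rotation-invariance of the Gaussian density allows one to isolate the spherical contribution by separating out the radial part.

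First, I would pass to polar coordinates $x=ru$ with $r>0$ and $u\in S^{n-1}$. Since the Gaussian density depends only on $|x|=r$, and $|\langle x,\theta\rangle|^q=r^q|\langle u,\theta\rangle|^q$, the integral factorises as
$$I=\frac{n\omega_n}{(\sqrt{2\pi})^n}\left(\int_0^\infty r^{n+q-1}e^{-r^2/2}\,dr\right)\int_{S^{n-1}}|\langle u,\theta\rangle|^q\,d\sigma(u),$$
where $n\omega_n$ is the surface area of $S^{n-1}$. The radial integral reduces under $s=r^2/2$ to the Gamma value $2^{(n+q)/2-1}\Gamma\!\left(\frac{n+q}{2}\right)$.

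Second, I would use the fact that $|\theta|=1$ makes $\langle X,\theta\rangle$ a standard real Gaussian whenever $X$ is a standard Gaussian vector in $\R^n$. Consequently $I$ collapses to the one-dimensional absolute Gaussian moment
$$I=\int_{-\infty}^\infty |t|^q\,\frac{e^{-t^2/2}}{\sqrt{2\pi}}\,dt=\frac{2^{(q-1)/2}}{\sqrt{\pi}}\,\Gamma\!\left(\frac{1+q}{2}\right),$$
again by the substitution $s=t^2/2$.

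Equating the two expressions for $I$ and using $n\omega_n=n\pi^{n/2}/\Gamma(1+n/2)$ to rewrite the surface area, the factors of $2^{n/2}$, $2^{q/2}$ and $\pi^{n/2}$ cancel and one recovers the stated identity. The argument is routine; the only step requiring attention is the bookkeeping of the Gamma and $2$-power constants, and no conceptual obstacle arises.
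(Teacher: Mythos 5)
Your argument is exactly the paper's: compute the Gaussian integral $I$ once in polar coordinates, once by reducing $\langle X,\theta\rangle$ to a standard one-dimensional Gaussian, and equate. The structure is sound. There is, however, a small constant slip in your one-dimensional evaluation: you wrote
$$
I=\int_{-\infty}^\infty |t|^q\,\frac{e^{-t^2/2}}{\sqrt{2\pi}}\,dt=\frac{2^{(q-1)/2}}{\sqrt{\pi}}\,\Gamma\!\left(\frac{1+q}{2}\right),
$$
but the substitution $s=t^2/2$ gives $\int_0^\infty t^q e^{-t^2/2}\,dt=2^{(q-1)/2}\Gamma\!\left(\frac{q+1}{2}\right)$, and the prefactor $\frac{2}{\sqrt{2\pi}}=\frac{\sqrt{2}}{\sqrt{\pi}}$ then yields $I=\frac{2^{q/2}}{\sqrt{\pi}}\Gamma\!\left(\frac{q+1}{2}\right)$, as in the paper. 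With your value, equating the two expressions produces $\frac{\sqrt{2}\,\Gamma\left(\frac{1+q}{2}\right)\Gamma\left(1+\frac{n}{2}\right)}{\sqrt{\pi}\,n\,\Gamma\left(\frac{n+q}{2}\right)}$ rather than the stated identity with a factor $2$. Restore the missing $\sqrt{2}$ and the computation closes correctly.
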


\bigskip
\begin{rmk}
By Stirling's formula we have that for all $q\geq 1$ and for all $\theta\in S^{n-1}$:
$$
\left(\int_{S^{n-1}}|\langle u,\theta\rangle|^qd\sigma(u)\right)^{\frac{1}{q}}\sim\sqrt{\frac{q}{q+n}}.
$$
\end{rmk}

\bigskip
\begin{cor}
There exists a constant $A>0$ such that for every $\theta\in S^{n-1}$ the functional $\sqrt{n}\langle P,\theta\rangle$ satisfies $\Vert\sqrt{n}\langle \cdot,\theta\rangle\Vert_{\psi_2}\leq A$.
\end{cor}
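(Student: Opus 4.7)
The plan is to verify the defining inequality directly by bounding the integral $\int_{S^{n-1}} e^{n\langle u,\theta\rangle^2/A^2}\,d\sigma(u)$ through a Taylor expansion and the even-moment estimates provided by the preceding lemma.

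First I would fix $\theta\in S^{n-1}$, expand the exponential as a power series, and interchange sum and integral (permitted by positivity), obtaining
$$
\int_{S^{n-1}} e^{n\langle u,\theta\rangle^2/A^2}\,d\sigma(u) \;=\; \sum_{k=0}^{\infty} \frac{n^k}{A^{2k}\,k!}\int_{S^{n-1}}\langle u,\theta\rangle^{2k}\,d\sigma(u).
$$
Next I would invoke the remark after the lemma with $q=2k$: there is an absolute constant $c>0$ such that
$$
\int_{S^{n-1}}\langle u,\theta\rangle^{2k}\,d\sigma(u) \;\leq\; \left(c\sqrt{\tfrac{2k}{2k+n}}\right)^{2k} \;\leq\; \left(\tfrac{2c^{2}k}{n}\right)^{k}.
$$

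Plugging this estimate in and using the standard bound $k!\geq (k/e)^{k}$ (with the $k=0$ term equal to $1$), I would get
$$
\sum_{k=0}^{\infty}\frac{n^{k}}{A^{2k}\,k!}\left(\tfrac{2c^{2}k}{n}\right)^{k} \;\leq\; \sum_{k=0}^{\infty}\left(\tfrac{2c^{2}e}{A^{2}}\right)^{k}.
$$
Choosing $A$ large enough so that $2c^{2}e/A^{2}\leq 1/2$, the geometric series is bounded by $2$. Hence the required inequality $\int e^{n\langle u,\theta\rangle^{2}/A^{2}}\,d\sigma<2$ holds, and the infimum in the definition of $\|\cdot\|_{\psi_{2}}$ is at most $A$ uniformly in $\theta$.

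The argument has no real obstacle; the only point requiring a bit of care is to make the asymptotic $\sim\sqrt{q/(q+n)}$ of the remark into a usable one-sided bound with an explicit absolute constant for even exponents $q=2k$, which follows either from Stirling applied to the exact formula of the lemma or by a direct comparison of the gamma quotient $\Gamma(k+\tfrac12)\Gamma(1+n/2)/\Gamma((n+2k)/2)$.
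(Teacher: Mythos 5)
Your proposal is correct and follows essentially the same route as the paper: power-series expansion of $e^{n\langle u,\theta\rangle^2/A^2}$, even-moment bounds from the preceding lemma/remark yielding $\int\langle u,\theta\rangle^{2k}\,d\sigma\le (C\sqrt{2k/n})^{2k}$, and a geometric series after absorbing the factorial via Stirling. The only cosmetic difference is that you use the cruder one-sided estimate $k!\geq(k/e)^k$, while the paper keeps the $1/\sqrt{2\pi q}$ factor from full Stirling before summing; both lead to the same geometric series and the same conclusion.
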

\begin{proof}
Let $\theta$ be a unit vector, then
\begin{eqnarray*}
\int_{S^{n-1}}e^{\frac{n|\langle P,\theta\rangle|^2}{A^2}}d\sigma(P)&=&\sum_{q=0}^\infty\int_{S^{n-1}}\frac{n^q|\langle P,\theta\rangle|^{2q}}{A^{2q}q!}\\
&\leq& 1 + \sum_{q=1}^\infty \frac{C^{2q}(2q)^{q}}{A^{2q}q!}\\
&\leq& 1 +C'\sum_{q=1}^\infty\left(\frac{2C^2e}{A^2}\right)^q\frac{1}{\sqrt{2\pi q}}\\
&\leq&1 + C'\left(\frac{1}{1-\frac{2C^2e}{A^2}}-1\right)\leq 2
\end{eqnarray*}
if $A>0$ is chosen large enough.
\end{proof}

\bigskip
\section{Symmetric convex hull of random points in the sphere}
\bigskip

In this section we are going to prove theorem \ref{teoremaprinc}.

\bigskip
As it was said in the introduction, the isotropy constant of every convex body verifies the following equation:
$$
nL_K^2=\min\left\{\frac{1}{|TK|^{1+\frac{2}{n}}}\int_{TK}|x|^2dx \, |\,T\in GL(n) \right\}
$$
so, in particular
$$
nL_K^2\leq \frac{1}{|K|^\frac{2}{n}}\frac{1}{|K|}\int_K|x|^2dx
$$

\bigskip
We will prove our result in two steps: we will give a lower bound for $|K|^\frac{1}{n}$ and an upper bound for $\frac{1}{|K|}\int_K|x|^2dx$ which hold with high probability and this will imply our statement.

\bigskip
\begin{lemma}\label{lema}
For every $\delta>0$, there exists a constant $c(\delta)$ such that if $(1+\delta)n<m<ne^{\frac{n}{2}}$, $\{P_i\}_{i=1}^m$ are independent random vectors on $S^{n-1}$, and $K= \textrm{conv}\{\pm P_1,\dots,\pm P_m\}$, then
$$
|K|^{\frac{1}{n}}\geq c(\delta)\frac{\sqrt{\log{\frac{m}{n}}}}{n}
$$
with probability greater than $1-e^{-n}$.
\end{lemma}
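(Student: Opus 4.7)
The plan is to reduce the volume lower bound to an inradius bound: I show that, with probability at least $1-e^{-n}$, the body $K$ contains the Euclidean ball $rB_2^n$ with $r:=c(\delta)\sqrt{\log(m/n)/n}$. Since $|K|^{1/n}\geq r\,\omega_n^{1/n}\geq cr/\sqrt n$, this immediately yields $|K|^{1/n}\geq c'(\delta)\sqrt{\log(m/n)}/n$, as required.

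The inclusion $rB_2^n\subset K$ is equivalent to $h_K(\theta)=\max_{1\leq i\leq m}|\langle P_i,\theta\rangle|\geq r$ for every $\theta\in S^{n-1}$. For a fixed $\theta$, independence of the $P_i$'s gives
$$
\Pro\bigl(\max_i|\langle P_i,\theta\rangle|<t\bigr)=(1-p(t))^m\leq e^{-mp(t)},
$$
where $p(t):=\sigma(\{u\in S^{n-1}:|\langle u,\theta\rangle|\geq t\})$. The density of $\langle u,\theta\rangle$ on $[-1,1]$ is proportional to $(1-s^2)^{(n-3)/2}$ with normalization of order $\sqrt n$ (exactly the computation behind Lemma 2.1), which gives $p(t)\geq c\sqrt{n}\,t\,(1-t^2)^{(n-1)/2}$ for $0<t\leq 1/2$, and hence a lower bound on $p(2r)$ of order $c(\delta)\sqrt{\log(m/n)}\,(n/m)^{2c(\delta)^2}$.

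Covering $S^{n-1}$ by an $r$-net $\mathcal{N}$ with $|\mathcal{N}|\leq(3/r)^n$ and using $\|P_i\|=1$, if $\max_i|\langle P_i,\theta'\rangle|\geq 2r$ holds for every $\theta'\in\mathcal{N}$, then $\max_i|\langle P_i,\theta\rangle|\geq r$ for every $\theta\in S^{n-1}$. A union bound gives
$$
\Pro(K\not\supset rB_2^n)\leq (3/r)^n\,e^{-mp(2r)},
$$
and the lemma follows once this is shown to be at most $e^{-n}$, i.e., once $mp(2r)\geq n\log(3/r)+n$. The upper bound $m<ne^{n/2}$ ensures $r\leq 1/2$ so that the cap estimate is valid, while $m>(1+\delta)n$ supplies the multiplicative gain $(m/n)^{1-2c(\delta)^2}$ in $mp(2r)$ that must beat the entropy term.

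The main obstacle is verifying this balancing inequality when $\log(m/n)$ is only of order one: the entropy term $n\log(3/r)$ is then of order $n\log n$, while the direct cap bound yields $mp(2r)$ only of order $c(\delta)n\sqrt{\log(m/n)}$. In that regime I expect the argument to be completed by a separate deterministic estimate---for instance, bounding $|K|$ below by the volume of the random cross-polytope $\text{conv}\{\pm P_1,\dots,\pm P_n\}$, which equals $\frac{2^n}{n!}|\det[P_1,\dots,P_n]|$; combining $\E|\det|^2=n!/n^n$ with Stirling then yields $|K|^{1/n}\geq c/n$ with high probability, which is already enough whenever $\log(m/n)\leq(c/c(\delta))^2$, leaving only the complementary range to be handled by the net argument above.
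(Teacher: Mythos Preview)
Your net argument cannot close in the critical regime. When $m\le Cn$ the radius is $r\asymp 1/\sqrt n$, so any $r$-net on $S^{n-1}$ has at least $c^n n^{n/2}$ points and the entropy term $n\log(3/r)$ is of order $\tfrac{n}{2}\log n$; on the other hand $p(2r)\le 1$, so $mp(2r)\le m\le Cn$, which is far too small to absorb $\tfrac{n}{2}\log n$, no matter how $c(\delta)$ is chosen. Your patch does not rescue this. First, the identity $\E|\det|^2=n!/n^n$ only controls the \emph{upper} tail of $|\det|$ via Markov; deducing $|\det|^{1/n}\ge c$ with failure probability $e^{-n}$ would require a small-ball inequality for the determinant (essentially a quantitative lower bound on the smallest singular value of $[P_1,\dots,P_n]$), which is a genuinely separate and nontrivial input that you have not supplied. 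Second, even granting $|K|^{1/n}\ge c/n$ with the required probability, this matches the target $c(\delta)\sqrt{\log(m/n)}/n$ only while $m/n$ stays below a fixed constant, whereas the computation above shows your net bound needs $m/n$ to grow with $n$ before it can beat the entropy; the intermediate window is left uncovered.

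The paper sidesteps the large net entirely by taking the union bound over \emph{facets} rather than over directions. Almost surely every facet of $K$ is a simplex $\textrm{conv}\{Q_1,\dots,Q_n\}$ with $Q_j\in\{\pm P_1,\dots,\pm P_m\}$, so there are at most $\binom{2m}{n}\le(2em/n)^n$ candidate supporting hyperplanes. If $\alpha B_2^n\not\subset K$ then one such hyperplane, with unit normal $\theta$ determined by the $n$ chosen points, satisfies $|\langle P_i,\theta\rangle|\le\alpha$ for every $i$; conditioned on those $n$ points the remaining $m-n$ are still independent and uniform, and each falls in the slab with probability at most $c\sqrt n\,\alpha$. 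Hence
\[
\Pro\{\alpha B_2^n\not\subset K\}\le\Bigl(\frac{2em}{n}\Bigr)^n\bigl(c\sqrt n\,\alpha\bigr)^{m-n}.
\]
For $(1+\delta)n\le m\le Cn$ the first factor is merely $(2eC)^n$, while with $\alpha=c_1(\delta)\sqrt{\log(m/n)/n}$ the second is at most $(cc_1\sqrt{\log C})^{\delta n}\le e^{-2n}$ once $c_1$ is small enough; for larger $m$ the same facet count is combined with the cap lower bound $p(\alpha)\ge c'e^{-4\alpha^2 n}$. The decisive gain over your approach is that the entropy is $n\log(2em/n)$ rather than $\tfrac{n}{2}\log n$.
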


\begin{proof}
First observe that, with probability 1, the facets of $K$ are simplices. Let $\alpha \in (0,1)$. If $\alpha B_2^n \nsubseteq K$ then there exists a facet of $K$, which lies in a hyperplane orthogonal to some vector $\theta\in S^{n-1}$  such that $|\langle P_i,\theta \rangle|\leq\alpha$ for all $i$. Let us denote this facet $\textrm{conv}\{Q_1,\dots,Q_n\}$ with $Q_j\in\{\pm P_1,\dots,\pm P_m\}$ and with $Q_i\neq\pm Q_j$. It follows that
$$
\Pro\{\alpha B_2^n \nsubseteq K\}\leq \left(\begin{array}{c}2m\\n\end{array}\right)\Pro\{P\in S^{n-1}\,:\, |\langle P,\theta\rangle|\leq\alpha\}^{m-n}
$$

\bigskip
If $\frac{c}{\sqrt{n}}\leq \alpha\leq \frac{1}{4}$ then
\begin{eqnarray*}
\Pro\{P\in S^{n-1}\,:\, |\langle P,\theta\rangle|>\alpha\}&=&2\frac{(n-1)\omega_{n-1}}{n\omega_n}\int_{\alpha}^1(1-x^2)^{\frac{n-3}{2}}\\
&\geq&2\frac{(n-1)\omega_{n-1}}{n\omega_n}\int_{\alpha}^{2\alpha}(1-x^2)^{\frac{n-3}{2}}\\
&\geq&2\frac{(n-1)\omega_{n-1 }}{n\omega_n}\alpha(1-4\alpha^2)^\frac{n-3}{2}\\
&\geq &c'(1-4\alpha^2)^\frac{n-3}{2}\\
&=&c'e^{\frac{n-3}{2}\log(1-4\alpha^2)}\\
&\geq& c'e^{-4\alpha^2n}.
\end{eqnarray*}

\bigskip
So we have
\begin{align*}
&\Pro\{\alpha B_2^n \nsubseteq K\}\leq\left(\begin{array}{c}2m\\n\end{array}\right)\left(1-c'e^{-4\alpha^2n}\right)^{m-n}\leq\left(\frac{2e m}{n}\right)^ne^{(m-n)\log(1-c'e^{-4\alpha^2n})}\cr
&\leq\left(\frac{2e m}{n}\right)^ne^{-c'(m-n)e^{-4\alpha^2n}}
\end{align*}

\bigskip
Now, set $x=\frac{m}{n}$. Using the fact that there exits a constant $C$ such that $\sqrt{x}<\frac{c'(x-1)}{\log{x}+2+\log{2}}$ for all $x>C$,one can check that if $Cn\leq m\leq ne^{\frac{n}{2}}$ and $\alpha= \frac{1}{2\sqrt{2}}\sqrt{\frac{\log{\frac{m}{n}}}{n}}$ then
$$
\Pro\left\{\frac{1}{2\sqrt{2}}\sqrt{\frac{\log{\frac{m}{n}}}{n}}B_2^n\nsubseteq K\right\}\leq e^{-n}.
$$

\bigskip
On the other hand, if $(1+\delta_0)n\leq m\leq Cn$, since
\begin{eqnarray*}
\Pro\{P\in S^{n-1}\, :\,|\langle P,\theta\rangle|\leq \epsilon\}&=&\frac{2(n-1)\omega_{n-1}}{n\omega_n}\int_0^\epsilon(1-x^2)^{\frac{n-3}{2}}dx\cr
&\leq&c\sqrt{n}\epsilon,
\end{eqnarray*}
we have that if $c_1=c_1(\delta)$ is chosen small enough
\begin{eqnarray*}
\Pro\left\{c_1\sqrt{\frac{\log{\frac{m}{n}}}{n}}B_2^n\nsubseteq K\right\}&\leq&\left(\frac{2em}{n}\right)^n\left(cc_1\sqrt{\log{\frac{m}{n}}}\right)^{m-n}\cr
&\leq&(2eC)^n(cc_1)^{m-n}(\log{C})^\frac{Cn}{2}\cr
&\leq&\left(2eC(cc_1)^{\delta}(\log{C}^\frac{C}{2})\right)^n\cr
&\leq&e^{-n}.
\end{eqnarray*}

\bigskip
Hence
$$
\Pro\left\{\min\left\{c_1(\delta),\frac{1}{2\sqrt{2}}\right\}\sqrt{\frac{\log{\frac{m}{n}}}{n}}B_2^n\nsubseteq K \right\}\leq e^{-n}
$$
and this completes the proof.
\end{proof}

\bigskip
Now let us give an upper bound for $\frac{1}{|K|}\int_{K}|x|^2dx$. It is stated in the next theorem whose proof follows the ideas in \cite{KLKO}.
\begin{thm}\label{teorema}
There exist absolute constants $C$ and $C_1$ such that if $\{P_i\}_{i=1}^m$ are independent random vectors on $S^{n-1}$, $m>n$, and $K= \textrm{conv}\{\pm P_1,\dots,\pm P_m\}$ then
$$
\Pro\left\{\frac{1}{|K|}\int_{K}|x|^2dx\leq C\frac{\log{\frac{m}{n}}}{n} \right\}\geq 1-2e^{-C_1n\log{\frac{m}{n}}}.
$$
\end{thm}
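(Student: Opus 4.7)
Following \cite{KLKO}, the strategy is to decompose $K$ into simplicial cones from the origin to each of its facets. With probability $1$, every facet $F$ of $K$ is an $(n-1)$-simplex whose vertices $v_1^F,\ldots,v_n^F$ lie in $\{\pm P_1,\ldots,\pm P_m\}\subset S^{n-1}$, so $K=\bigcup_F C_F$ with $C_F=\textrm{conv}(0,v_1^F,\ldots,v_n^F)$. A direct integration in barycentric coordinates on each simplicial cone gives
\[
\frac{1}{|C_F|}\int_{C_F}|x|^2\,dx=\frac{\sum_i|v_i^F|^2+\bigl|\sum_i v_i^F\bigr|^2}{(n+1)(n+2)}=\frac{n+\bigl|\sum_i v_i^F\bigr|^2}{(n+1)(n+2)},
\]
using $|v_i^F|=1$, so averaging over cones with weights $|C_F|/|K|$ yields
\[
\frac{1}{|K|}\int_K|x|^2\,dx=\frac{n}{(n+1)(n+2)}+\frac{1}{(n+1)(n+2)\,|K|}\sum_F|C_F|\,\bigl|\sum_i v_i^F\bigr|^2.
\]
The first summand is $O(1/n)$ and is absorbed by $C\log(m/n)/n$ in the nontrivial regime $\log(m/n)\geq c_0>0$ for some absolute constant $c_0$; in the complementary range one has $2e^{-C_1n\log(m/n)}\geq 1$ and nothing is to prove.

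The remaining task is to show that, with probability at least $1-2e^{-C_1n\log(m/n)}$, the uniform bound $\bigl|\sum_{i=1}^n v_i^F\bigr|^2\leq Cn\log(m/n)$ holds for every facet $F$ of $K$. Since each facet vertex collection is an $n$-element subset of $\{\pm P_1,\ldots,\pm P_m\}$, and the class $\mathcal S$ of all such subsets has $|\mathcal S|\leq\binom{2m}{n}\leq(2em/n)^n$, it suffices to prove the stronger estimate $\sup_{S\in\mathcal S}\bigl|\sum_{v\in S}v\bigr|^2\leq Cn\log(m/n)$. Because $\bigl|\sum_{v\in S}v\bigr|=\sup_{\phi\in S^{n-1}}\sum_{v\in S}\langle v,\phi\rangle$, I fix a $1/2$-net $\mathcal N\subset S^{n-1}$ of cardinality $|\mathcal N|\leq 5^n$ and apply the standard successive-approximation estimate $\sup_{S^{n-1}}\leq 2\sup_{\mathcal N}$ to reduce the problem to bounding $|\sum_{v\in S}\langle v,\phi\rangle|$ uniformly over pairs $(S,\phi)\in\mathcal S\times\mathcal N$.

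For each fixed pair $(S,\phi)$, the quantity $\sum_{v\in S}\langle v,\phi\rangle$ is a signed sum of $n$ independent copies of $\langle P,\phi\rangle$, whose $\psi_2$-norm is at most $A/\sqrt n$ by the Corollary in Section 2. Standard sub-Gaussian concentration gives $\Pro(|\sum_{v\in S}\langle v,\phi\rangle|>t)\leq 2e^{-ct^2/A^2}$; taking $t=C_2\sqrt{n\log(m/n)}$ and union-bounding over the $|\mathcal N|\cdot|\mathcal S|\leq(10em/n)^n$ pairs, the total failure probability is at most
\[
2(10em/n)^n\,e^{-cC_2^2n\log(m/n)/A^2}\leq 2e^{-C_1n\log(m/n)}
\]
for $C_2$ a sufficiently large absolute constant, using that $n\log(10em/n)\leq C'n\log(m/n)$ in the nontrivial regime. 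Substituting the resulting uniform estimate $|\sum_i v_i^F|^2\leq Cn\log(m/n)$ into the cone decomposition completes the proof. The main obstacle is this sharp balance between the combinatorial entropy $\sim n\log(m/n)$ of the candidate facet subsets (together with the net) and the sub-Gaussian tail strength; the refined bound $\binom{2m}{n}\leq(2em/n)^n$ rather than the crude $(2m)^n$ is essential, since otherwise one would obtain only $\log m$ in place of $\log(m/n)$, which is not strong enough to produce the absolute bound on $L_K$ via Theorem~\ref{teoremaprinc}.
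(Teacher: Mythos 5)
Your proposal follows the same strategy as the paper: decompose $K$ by its facets, compute the second moment on each piece via an explicit moment formula on a reference simplex, reduce the facet-wise bound to controlling $\bigl|\sum_i v_i^F\bigr|^2$ uniformly over all $\binom{2m}{n}$ candidate vertex sets, and close the argument with a $\frac12$-net, a sub-Gaussian tail bound coming from $\Vert\sqrt n\langle\cdot,\theta\rangle\Vert_{\psi_2}\leq A$, and a union bound whose entropy $\sim n\log\frac{m}{n}$ is matched by the tail. The only differences are cosmetic. You integrate over the $n$-dimensional cones $C_F=\mathrm{conv}(0,v_1^F,\dots,v_n^F)$ with the Dirichlet moment $\frac{1}{|S|}\int_S t_it_j\,dt=\frac{1+\delta_{ij}}{(n+1)(n+2)}$; the paper integrates over the $(n-1)$-dimensional facets via $T(\Delta^{n-1})$ and the moment $\frac{1+\delta_{ij}}{n(n+1)}$, then passes to the cones through the two identities $\int_K|x|^2dx=\sum_i\frac{d(0,\mathcal F_i)}{n+2}\int_{\mathcal F_i}|y|^2dy$ and $n|K|=\sum_i d(0,\mathcal F_i)|\mathcal F_i|$. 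Both routes land on the identical expression $\frac{n+|\sum_iv_i^F|^2}{(n+1)(n+2)}$. You also union-bound simultaneously over the $\binom{2m}{n}\cdot 5^n$ pairs (subset, net point); the paper first dispatches the net to get a tail bound for a single $n$-tuple and then union-bounds over the $\binom{2m}{n}$ subsets — same count, same exponent.

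One small inaccuracy worth flagging: your sentence about the ``complementary range'' is not quite right. If $\log\frac{m}{n}<c_0$ but $n$ is large, then $n\log\frac{m}{n}$ can still be large, so $2e^{-C_1n\log(m/n)}$ need not be $\geq 1$ and the claim is not automatically vacuous. The honest reading, which the paper also tacitly uses, is that the estimate is really being proved under a lower bound $\log\frac{m}{n}\geq c_0$ (equivalently, with $\log\frac{m}{n}$ replaced by $\max\{1,\log\frac{m}{n}\}$), and this suffices because the main theorem only uses $m>(1+\delta)n$. Your proof needs such a lower bound in two places — to absorb the $O(1/n)$ term and to dominate the additive $n\log(10e)$ coming from the net and the binomial count — exactly as the paper does; neither presentation handles the edge case $m/n\to1$ cleanly, so this is a shared blemish rather than a defect peculiar to your write-up.
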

\bigskip

\begin{proof}
By Bernstein's inequality, if $\{P_i\}_{i=1}^n$ are independent random vectors on the sphere and if $\theta$ is a fixed point on the sphere, then, for all $\epsilon >0$,
$$
\Pro\left\{\left|\sum_{i=1}^n \langle P_i,\theta \rangle\right|>\epsilon n\right\}\leq 2e^{-\frac{\epsilon^2n^2}{8A^2}}.
$$

\bigskip
Now let $\mathcal{N}$ be a $\frac{1}{2}$-net on the sphere such that $\left|\mathcal{N}\right|\leq 5^n$. Then
$$
\Pro\left\{\left|\sum_{i=1}^n \langle P_i,\theta\rangle\right|>\epsilon n \textrm{ for some }\theta\in \mathcal{N}\right\}\leq 2e^{-\frac{\epsilon^2n^2}{8A^2}+n\log{5}},
$$
and hence,
$$
\Pro\left\{\left|\sum_{i=1}^n\langle P_i,\theta\rangle\right|\leq\epsilon n \textrm{ for every } \theta\in \mathcal{N}\right\}\geq 1-2e^{-\frac{\epsilon^2n^2}{8A^2}+n\log{5}}
$$

Every $\theta \in S^{n-1}$ can be written in the form $\theta = \sum_{j=1}^\infty \delta_j\theta_j$, with $\theta_j\in\mathcal{N}$ and $0\leq\delta_j\leq \left(\frac{1}{2}\right)^{j-1}$ so, if for every $\theta \in \mathcal{N}$  it is true that $\left|\sum_{i=1}^n\langle P_i,\theta\rangle\right|\leq\epsilon n$, then for every $\theta \in S^{n-1}$
$$
\left|\langle \sum_{i=1}^nP_i,\theta\rangle\right| = \left|\langle \sum_{i=1}^nP_i,\sum_{j=1}^\infty \delta_j\theta_j\rangle\right|\leq \sum_{j=1}^\infty\delta_j|\langle\sum_{i=1}^nP_i,\theta_j\rangle|\leq 2\epsilon n.
$$

Hence
$$
\Pro\left\{\left|\sum_{i=1}^nP_i\right| <2\epsilon n\right\} = \Pro\left\{\max_{\theta\in S^{n-1}}\langle \sum_{i=1}^nP_i,\theta\rangle <2\epsilon n \right\}\geq 1-2e^{-\frac{\epsilon^2n^2}{8A^2}+n\log{5}}.
$$

\bigskip
Now, since $\sum_{i\neq j}\langle P_i,P_j\rangle = \left|\sum_{i=1}^nP_i\right|^2-\sum_{i=1}^n|P_i|^2 = \left|\sum_{i=1}^nP_i\right|^2-n$, this implies that if $\epsilon>\epsilon_0>\sqrt{32A^2\log{5}}$, then
$$
\Pro\left\{\sum_{i\neq j}\langle P_i,P_j\rangle \leq \epsilon n\right\}\geq 1-2e^{-C\epsilon n}.
$$
Since every facet is with probability one $\mathcal{F}_k=\textrm{conv}\{Q_1^k,\dots Q_n^k\}$ with $Q_i^k\in \{\pm P_1,\dots,\pm P_m\}$ and with $Q_i^k\neq \pm Q_j^k$ and since $P$ and $-P$ have the same distribution, if we put $\mathcal{F}_1,\dots,\mathcal{F}_l$ a complete list of the $(n-1)$-dimensional facets of $K$, then
\begin{align*}
\Pro\left\{\max_{k=1\dots l}\sum_{Q_i^k\neq Q_j^k}\langle Q_i^k,Q_j^k\rangle>\epsilon n\log{\frac{m}{n}} \right\}&\leq \left(\begin{array}{c}2m\\n\end{array}\right)2e^{-C\epsilon n\log{\frac{m}{n}}}\cr&\leq2e^{-C\epsilon n\log{\frac{m}{n}} +n\log\left(\frac{2e m}{n} \right)}
\end{align*}
whenever $\epsilon>\epsilon_0$, so choosing a constant $\epsilon$ big enough we have that
$$
\Pro\left\{\max_{k=1\dots l}\sum_{Q_i^k\neq Q_j^k}\langle Q_i^k,Q_j^k\rangle>C n\log{\frac{m}{n}} \right\}\leq 2e^{-C_1n\log\frac{m}{n}}.
$$

\bigskip
For each facet $\mathcal{F}_k = \textrm{conv}\{Q_1^k,\dots,Q_n^k\}$, let $T$ be the following linear transformation:
$$
T=\left(\begin{array}{c c c}Q_1^k(1)&\dots&Q_n^k(1)\\ \vdots& &\vdots \\ Q_1^k(n)&\dots&Q_n^k(n)\end{array}\right).
$$

Then $\mathcal{F}_k=T(\Delta^{n-1})$ and
$$
\frac{1}{|\mathcal{F}_k|}\int_{\mathcal{F}_k}|x|^2dx = \frac{1}{|\Delta^{n-1}|}\int_{\Delta^{n-1}}|Tx|^2dx.
$$

\bigskip
Since
$$Tx=\left(\begin{array}{c}\sum_{i=1}^nQ_i^k(1)x_i\\ \vdots\\ \sum_{i=1}^nQ_i(n)x_i\end{array}\right)$$ then $$|Tx|^2=\sum_{j=1}^n\left(\sum_{i=1}^nQ_i^k(j)x_i\right)^2=\sum_{j=1}^n\sum_{i_1,i_2=1}^nQ_{i_1}^k(j)Q_{i_2}^k(j)x_{i_1}x_{i_2}$$
so
$$
\frac{1}{|\mathcal{F}_k|}\int_{\mathcal{F}_k}|x|^2dx =\frac{1}{|\Delta^{n-1}|}\sum_{j=1}^{n}\sum_{i_1,i_2=1}^nQ_{i_1}^k(j)Q_{i_2}^k(j)\int_{\Delta^{n-1}}x_{i_1}x_{i_2}dx.
$$

\bigskip
From the identity
$$
\frac{1}{|\Delta^{n-1}|}\int_{\Delta^{n-1}}x_{i_1}x_{i_2}dx= \frac{1+\delta_{i_1i_2}}{n(n+1)}
$$
this quantity equals
$$
\frac{1}{n(n+1)}\sum_{j=1}^n\left(\sum_{i=1}^n2Q_{i}^k(j)^2+\sum_{i_1\neq i_2}Q_{i_1}^k(j)Q_{i_2}^k(j)\right)=\frac{2}{n+1}+\frac{1}{n(n+1)}\sum_{i_1\neq i_2}\langle Q_{i_1}^k,Q_{i_2}^k\rangle
$$
and there exist absolute constants $C$ and $C_1$ such that the maximum of this quantity over all facets is less than $C\frac{\log{\frac{m}{n}}}{n}$ with probability bigger than $1-2e^{-C_1 n \log{\frac{m}{n}}}$ so
$$
\sup_{i=1\dots l}\frac{1}{\left|\mathcal{F}_i\right|}\int_{\mathcal{F}_i}|y|^2dy \leq C\frac{\log{\frac{m}{n}}}{n}
$$
with probability bigger than $1-2e^{-C_1 n \log{\frac{m}{n}}}$, where $C$ and $C_1$ are absolute constants.

\bigskip
But, in the same way as it is proved in \cite{KLKO} we have that
\begin{itemize}
\item{$\frac{1}{|\mathcal{K}|}\int_{K}|x|^2dx = \frac{1}{|K|}\sum_{i=1}^l\frac{d(0,\mathcal{F}_i)}{n+2}\int_{\mathcal{F}_i}|y|^2dy$}
\item{$n|K|=\sum_{i=1}^ld(0,\mathcal{F}_i)|\mathcal{F}_i|$}
\end{itemize}
and hence
$$
\frac{1}{|K|}\int_{\mathcal{K}}|x|^2dx\leq \frac{n}{n+2}\sup_{i=1\dots l}\frac{1}{\left|\mathcal{F}_i\right|}\int_{\mathcal{F}_i}|y|^2dy\leq C\frac{\log\frac{m}{n}}{n}
$$
with probability greater than $1-2e^{-C_1n\log\frac{m}{n}}$.
\end{proof}

\bigskip
Lemma \ref{lema} and theorem \ref{teorema} imply that for every $\delta>0$ there exist absolute constants  $c(\delta)$, $c_1$, $c_2$, $C_1$,  such that if $(1+\delta)n<m\leq ne^{\frac{n}{2}}$ then
$$
\Pro\{L_K\leq c\}\geq 1-2e^{-C_1n\log\frac{m}{n}}-e^{-n}>1-c_1e^{-c_2n\min\{1,\log\frac{m}{n}\}}.
$$
Note that in case $m>ne^{\frac{n}{2}}$ we have that
$$
\Pro\{\frac{1}{4}B_2^n\nsubseteq K\}\leq e^{-n}
$$
so, with probability greater than $1-e^{-n}$,
$$
nL_K^2\leq \frac{1}{|K|^{\frac{2}{n}}}\frac{1}{|K|}\int_K|x|^2dx\leq \frac{1}{|\frac{1}{4}B_2^n|^\frac{2}{n}}\leq cn
$$
and so the proof is complete.

\bigskip

{\centerline {ACKNOWLEDGEMENTS}}

\bigskip
This paper was written while the author was in an early stage researcher position of the research training network ``Phenomena in High Dimensions" (MRTN-CT-2004-511953) in Athens. The author would like to thank professor Apostolos Giannopoulos for several helpful discussions as well as for his hospitality.

\end{document}